\definecolor{blue2}{rgb}{0.67, 0.9, 0.93}
\numberwithin{equation}{section}
\newtheorem{theorem}{Theorem}[section]
\newtheorem{lemma}[theorem]{Lemma}
\theoremstyle{definition}
\newtheorem{corollary}[theorem]{Corollary}
\newtheorem{remark}[theorem]{Remark}
\newcommand{\N}{{\mathbb N}}
\newcommand{\R}{{\mathbb R}}
\newcommand{\Z}{\mathbb{Z}}
\newcommand{\To}{\rightarrow}
\newcommand{\gM}{g_{\mathrm{MK}}}
\title{\sc Explicit propagation reversal bounds for bistable differential equations on trees}
\author[1]{Petr Stehl\'{\i}k\thanks{\tt pstehlik@kma.zcu.cz}}
\affil[1]{\small Department of Mathematics and NTIS, Faculty of Applied Sciences, University of West Bohemia,\authorcr Univerzitn\'\i~8, 306 14 Plze\v{n}\\ Czech Republic}
\begin{document}

\maketitle

\begin{abstract}
In this paper we provide explicit description of the pinning region and propagation reversal phenomenon for the bistable reaction diffusion equation on regular biinfinite trees. In contrast to the general existence results for smooth bistabilities, the closed-form formulas are enabled by the choice of the piecewise linear McKean's caricature. We construct exact pinned waves and show their stability. The results are qualitatively similar to the propagation reversal results for smooth bistabilities. Major exception consists in the unboundedness of the pinning region in the case of the bistable McKean's caricature. Consequently, the propagation reversal also occurs for arbitrarily large diffusion.
\end{abstract}

\smallskip
\noindent\textbf{Keywords:} reaction-diffusion equations; lattice differential equations;
traveling waves; propagation reversal; propagation failure; homogeneous trees.

\smallskip
\noindent\textbf{MSC 2010:} 34A33, 37L60, 39A12, 65M22



\section{Introduction}
In this paper we provide explicit description of the propagation reversal and pinning region for the traveling waves of the reaction-diffusion equation on biinfinite $k$-ary trees
\begin{equation}\label{e:TDE}
    \dot{u}_i = d(ku_{i+1} - (k+1)u_i + u_{i-1} ) + g(u_i;a), \quad i\in \Z,\ t>0,
\end{equation}
with the bistable reaction function $g$ given by the piecewise linear McKean's caricature
\begin{equation}\label{e:McKean}
    \gM(u;a) = \begin{cases}
-u, & u< a,\\
1-u, & u\geq a.
\end{cases}
\end{equation}
Beside the closed-form description we show that the pinning region is unbounded in the $(d,a)$-plane which contrasts with the results for the propagation reversal with smooth bistabilities \cite{Hupkes2023}.

Reaction-diffusion equations play important role in chemistry, biology, ecology, material science, etc. They serve as the primary model for numerous dynamic phenomena including traveling waves or pattern formation. In the prevalent settings, the homogeneous continuous space leads to partial differential equations (PDEs) such as the scalar Nagumo equation
\begin{equation}\label{e:PDE}
    u_t = d u_{xx} + g(u;a), \quad x\in\R,\ t>0,
\end{equation} 
where $d>0$ is the diffusion parameter and $g$ is a bistable reaction function, e.g., the cubic
\begin{equation}\label{e:cubic}
    g(u;a) = u(1 -u)(u -a), \quad a\in (0,1).    
\end{equation} 
The traveling wave solution of the PDE~\eqref{e:PDE} has a form $u(x,t)=U(x-ct)$ with an increasing profile $U(z)$ satisfying $U(-\infty)=0$ and $U(+\infty)=1$ and nonzero speed $c\neq 0$. It exists whenever $a\neq 1/2$ in the case of the cubic $g$ given by \eqref{e:cubic}, see Figure~\ref{fig:pinnings}(a). In the case of a general bistable reaction $g$ on $[0,1]$ traveling waves occur whenever
\[
\int_0^1 g(u;a) \mathrm{d}u \neq 0.
\]

\begin{figure}
     \centering
     \begin{subfigure}[b]{0.4\textwidth}
         \centering
         \includegraphics[width=\textwidth]{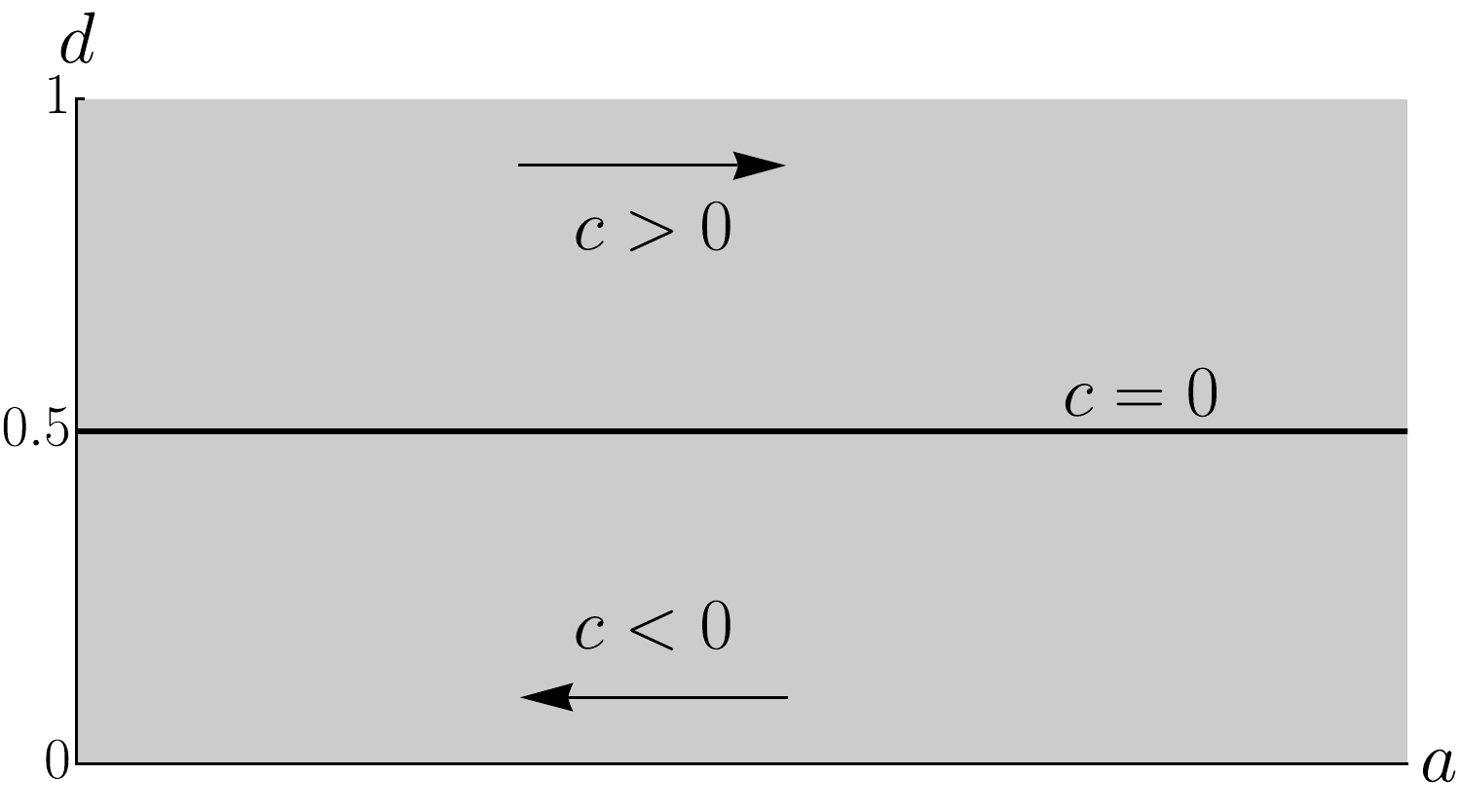}
         \caption{PDE \eqref{e:PDE}}         
     \end{subfigure}
          \begin{subfigure}[b]{0.4\textwidth}
         \centering
         \includegraphics[width=\textwidth]{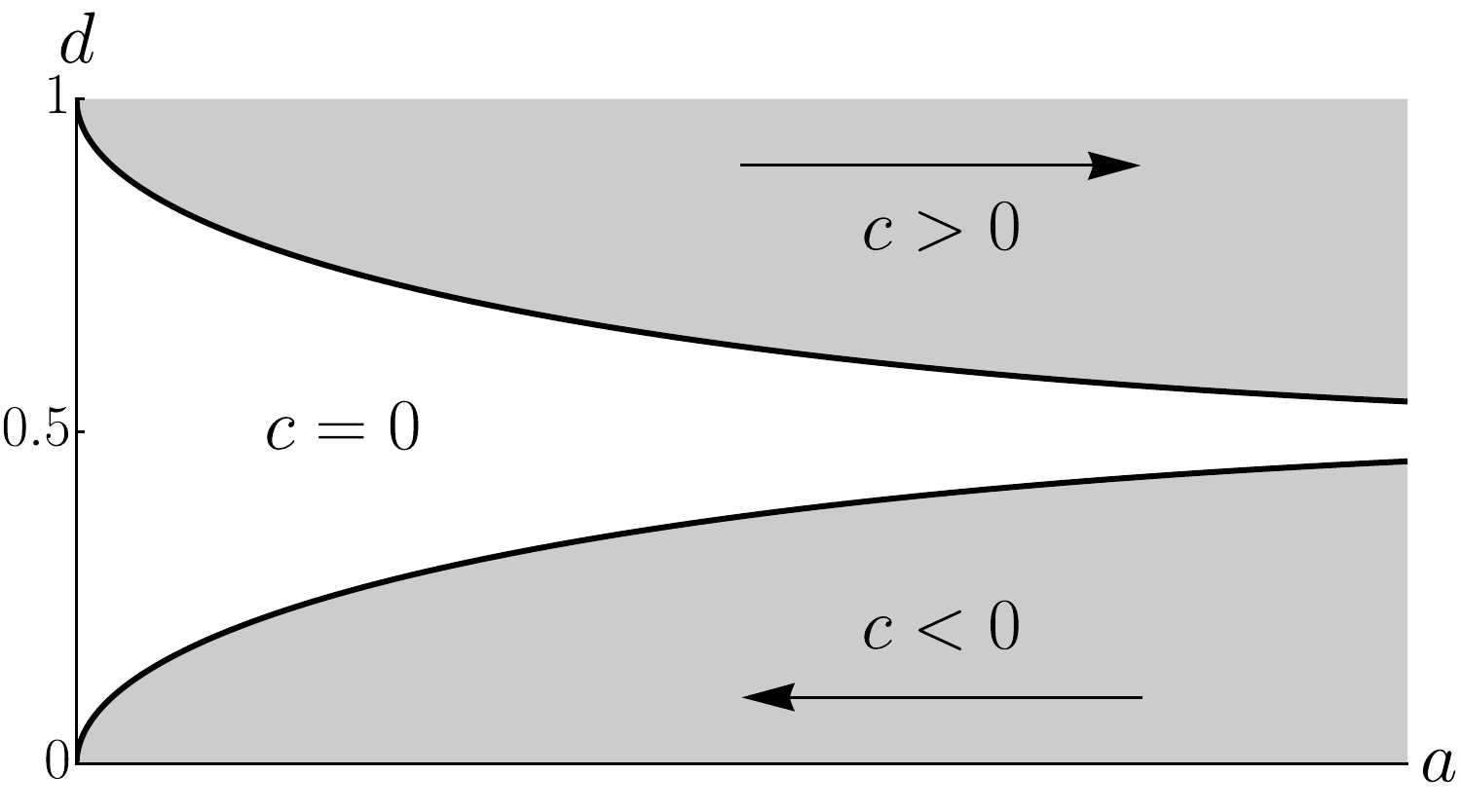}
         \caption{LDE \eqref{e:LDE}}         
     \end{subfigure}     
     \begin{subfigure}[b]{0.4\textwidth}
         \centering
         \includegraphics[width=\textwidth]{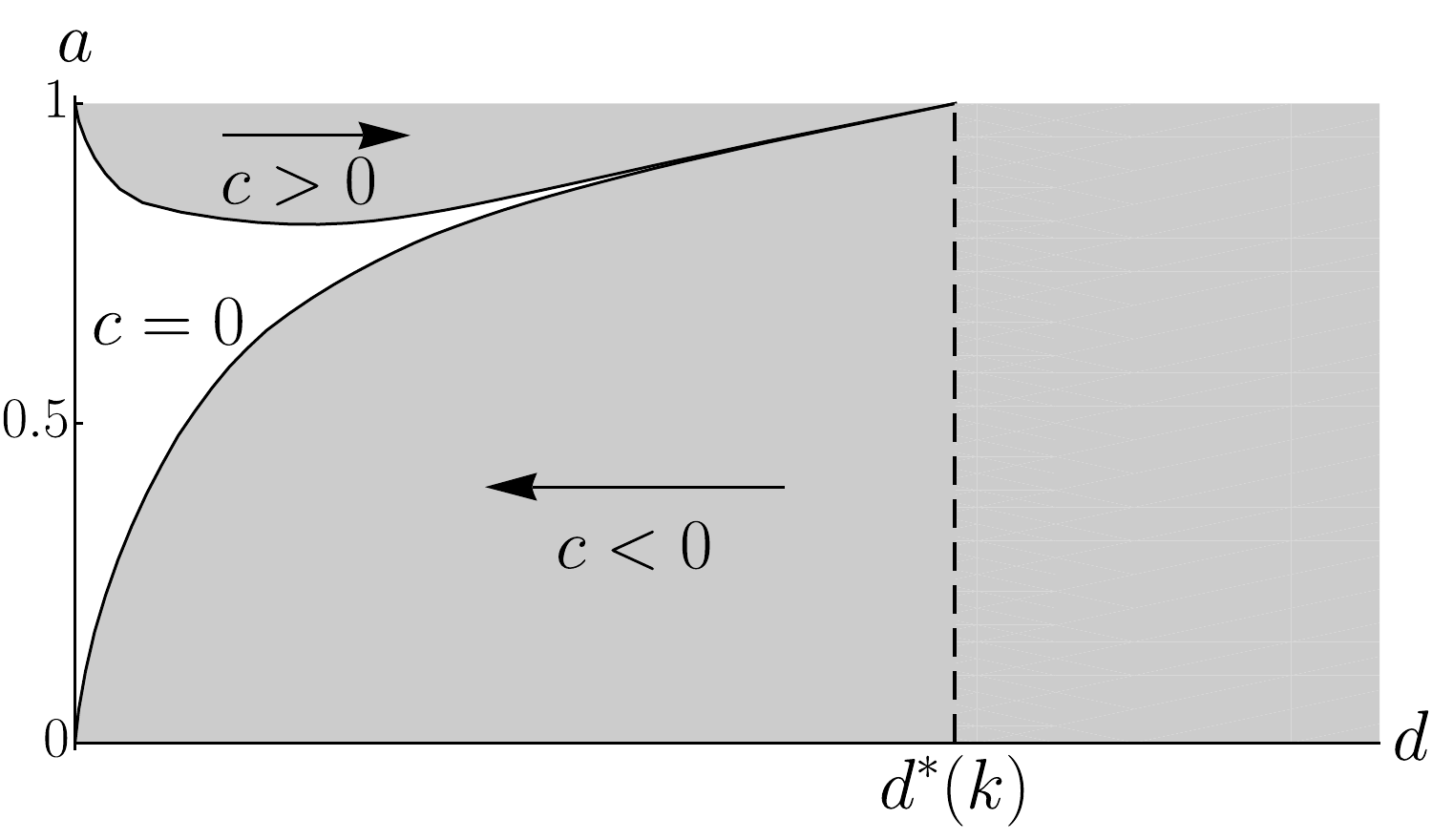}
         \caption{TDE \eqref{e:TDE}, smooth $g$}        
     \end{subfigure}
     \begin{subfigure}[b]{0.4\textwidth}
         \centering
         \includegraphics[width=\textwidth]{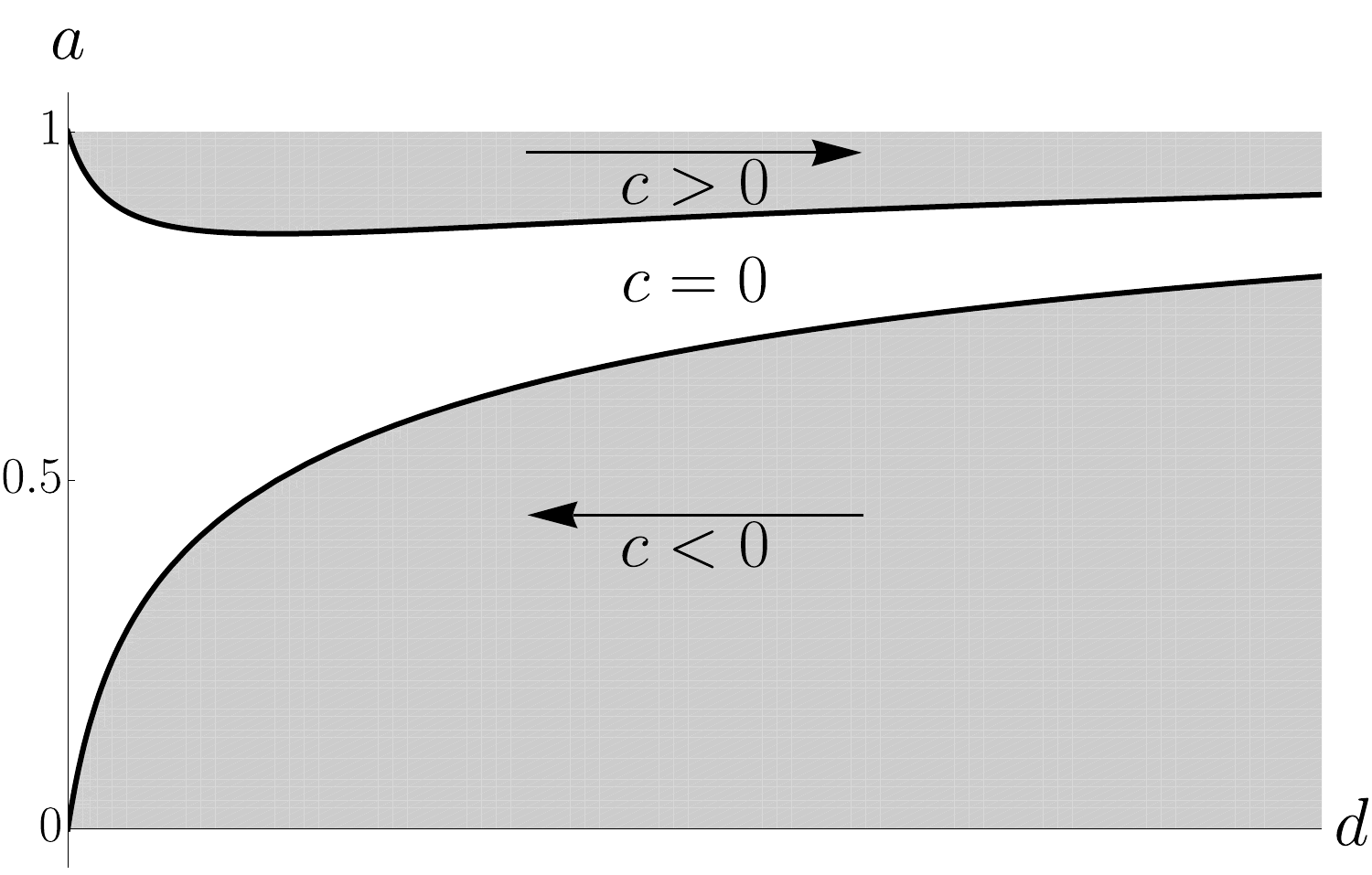}
         \caption{TDE \eqref{e:TDE}, McKean's $g$ given by \eqref{e:McKean}}    
    \end{subfigure}
        \caption{Wave directions and pinning regions for selected reaction-diffusion models.}
        \label{fig:pinnings}
\end{figure}

\paragraph{Lattice reaction-diffusion equations and propagation failure.} In many applications, the naturally discrete character of the underlying spatial structure (for instance neurons, cells, discrete habitats as islands, chemical mixers, \ldots) leads to discrete-space alternatives. In the simplest configuration, the lattice differential equation (LDE)
\begin{equation}\label{e:LDE}
\begin{aligned}
        \dot{u}_i & = d(u_{i+1} - 2u_i + u_{i-1} ) + g(u_i;a), \quad i\in \Z,\ t>0,
\end{aligned}
\end{equation}
may arise directly from the modeling of the natural spatial discreteness or from the PDE~\eqref{e:PDE} by the standard spatial Euler discretization. In contrast to the PDE~\eqref{e:PDE}, the traveling wave solution $u_i(t)=U(i-ct)$ with $c\neq 0$ exists only for sufficiently strong diffusion $d\gg 0$ for a given $a\in(0,1)$ for the LDE~\eqref{e:LDE} with the bistable cubic \eqref{e:cubic}, \cite{Zinner1991}. Moreover, it is pinned with $c=0$ for small values of the diffusion $d$ \cite{Keener1987}, see Figure~\ref{fig:pinnings}(b). This phenomenon is known as the \emph{propagation failure} and is characteristic to discrete-space models \cite{Chow1998}. The pinning region is the set of parameters  $(a,d)$ in which the monotone wave profiles do not move (i.e., $c=0$) and is characterized by the existence of large number of stationary patterns, a phenomenon known as the \emph{spatial topological chaos} \cite{Chow1998}.

\paragraph{Graph reaction-diffusion equations.} The homogeneous discrete structure of the integer lattice $\Z$ in LDE~\eqref{e:LDE} does not capture the intrinsic spatial heterogeneity of real-world networks of cells, neurons, islands, etc. \cite{Besse2021a, Slavik2020, Stehlik2017}. Considering a general graph $\mathcal{G}=(V,E)$ where $V$ is a set of vertices of $\mathcal{G}$ and $E$ its set of edges we arrive to the network alternative of the PDE~\eqref{e:PDE} and the LDE~\eqref{e:LDE}, the graph differential equation (GDE)
\begin{equation}\label{e:GDE}
    \dot{u}_i = d\sum_{j\in \mathcal{N}(i)} (u_j-u_i) + g(u_i;a),\quad i\in V,\ t\in\mathbb{R},
\end{equation}
where $\mathcal{N}(i)=\{j\in V: (i,j)\in E\}$ is the neighborhood of a vertex $i\in V$. GDEs have a rich set of stationary patterns \cite{Stehlik2017} whose properties also strongly depend on the graph structure. The bifurcation phenomena are far from being fully understood even on the simplest (small) graphs \cite{Stehlik2023a}. On the other hand, the description of the structure of stationary solutions in particular cases (such as the cyclic graphs $\mathcal{G}=\mathcal{C}_n$, \cite{Hupkes2019c}) helped to describe new type of nonmonotone traveling waves which exist in the pinning region of the LDE~\eqref{e:LDE}, \cite{Hupkes2019b}.

\paragraph{Tree reaction-diffusion equations.} The propagation phenomena on general graphs/networks seem to be an even harder challenge than the analysis of stationary patterns. Regular $k$-ary tree graphs consist of nodes which have a single `parent' and $k$ `children', with $k\in\N$, $k>1$. Tree graphs serve as a natural first step in the analysis of propagation on non-lattice graphs. Trees have been considered not only in the settings of the Nagumo equation \cite{Hupkes2024, Hupkes2023, Kouvaris2012} but also in the case of monostable Fisher equation \cite{Hoffman2019} and propagation in the SIR model \cite{Besse2021}. Whereas a tree may be considered finite with $|V|<\infty$ \cite{Kouvaris2012} or semiinfinite \cite{Besse2021, Hoffman2019}, we consider a biinfinite $k$-ary tree $\mathcal{T}_k=(V,E)$ as in \cite{Hupkes2024, Hupkes2023}. We number its vertices $V=\Z\times \N_0$. The set of edges $E$ (capturing connections of a parent $(i,j)$ to its $k$ children) is then (see Figure~\ref{fig:binary:tree})
\begin{equation*}
     \qquad E = \left\{\big((i,j), (i+1, kj + l)\big): \ i\in \Z, \ j\in \N_0, \ l\in \{0,\ldots, k-1\} \right\}.
\end{equation*}
We focus on a special class of solutions, the so called \emph{layer solutions}, \cite{Hupkes2023}, which have the same value for each `age generation' and for all $(i,j)\in V=\Z\times \N_0$ are independent of $j$, i.e.,
\[
u_{i,j}(t)=u_i(t).
\]
The layer solutions on biinfinite $k$-ary trees thus satisfy the simplified version of the GDE~\eqref{e:GDE}, the tree differential equation (TDE) \eqref{e:TDE}, in which $u_{i-1}(t)$ describes the value in the parent layer and $u_{i+1}$ the value in the children layer.

Beside the relative accessibility of the TDE~\eqref{e:TDE}, the natural motivation stems also from the fact that the lattice equation~\eqref{e:LDE} coincides with this model for $k=1$. Moreover, the results are also connected to the dynamics on random graphs, since $k$-ary trees locally approximate Erd\"{o}s-Rényi random graphs, \cite{Kouvaris2012}.

\paragraph{Lattice reaction diffusion advection equation.} Furthermore, the TDE~\eqref{e:TDE} can be also rewritten as the lattice reaction diffusion advection equation once we rearrange the terms into
\begin{equation}\label{e:kLDE}
        \dot{u}_i = d(u_{i+1} - 2u_i + u_{i-1}) + d(k-1)(u_{i+1} - u_i) +  g(u_i;a), \quad i\in \Z, \ t>0.
\end{equation}
Consequently, a general real advection parameter $k\in\R$, can be considered instead of the branching factor $k\in\N$, $k>1$. We restrict our attention to $k>1$ so that the advection direction coincides.

While studying the traveling waves solutions of the TDE~\eqref{e:kLDE} we are looking for solutions of the form $u_i(t)=U(i-ct)$ which connect the two stable equilibria of the bistability $g$, i.e., $U(-\infty)=0$ and $U(+\infty)=1$. Waves with $c>0$ can be seen as right-moving from the lattice formulation~\eqref{e:kLDE} but also down the tree (from parents to children) from the tree point of view, see Figure~\ref{fig:binary:tree}. Similarly, waves with $c<0$ are left moving from the lattice point of view but can be seen as moving up the tree or from children to parents in the interpretation of tree layers.


\begin{figure}
\centering
\begin{minipage}{.5\textwidth}
  \centering
  \includegraphics[width=\linewidth]{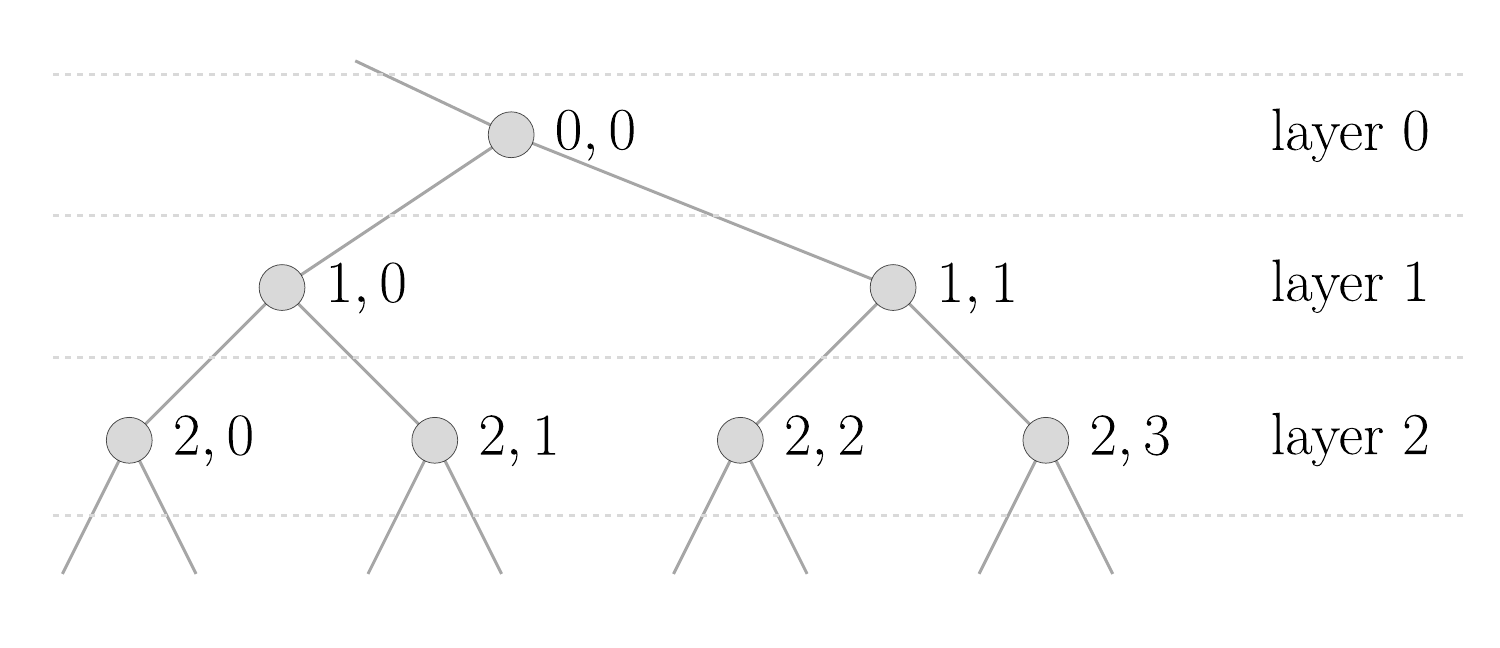}
  \captionof{figure}{Part of a biinfinite binary tree $\mathcal{T}_2$ and illustration of its layers.}
  \label{fig:binary:tree}
\end{minipage}\phantom{aa}
\begin{minipage}{.47\textwidth}
  \centering
  \includegraphics[width=.7\linewidth]{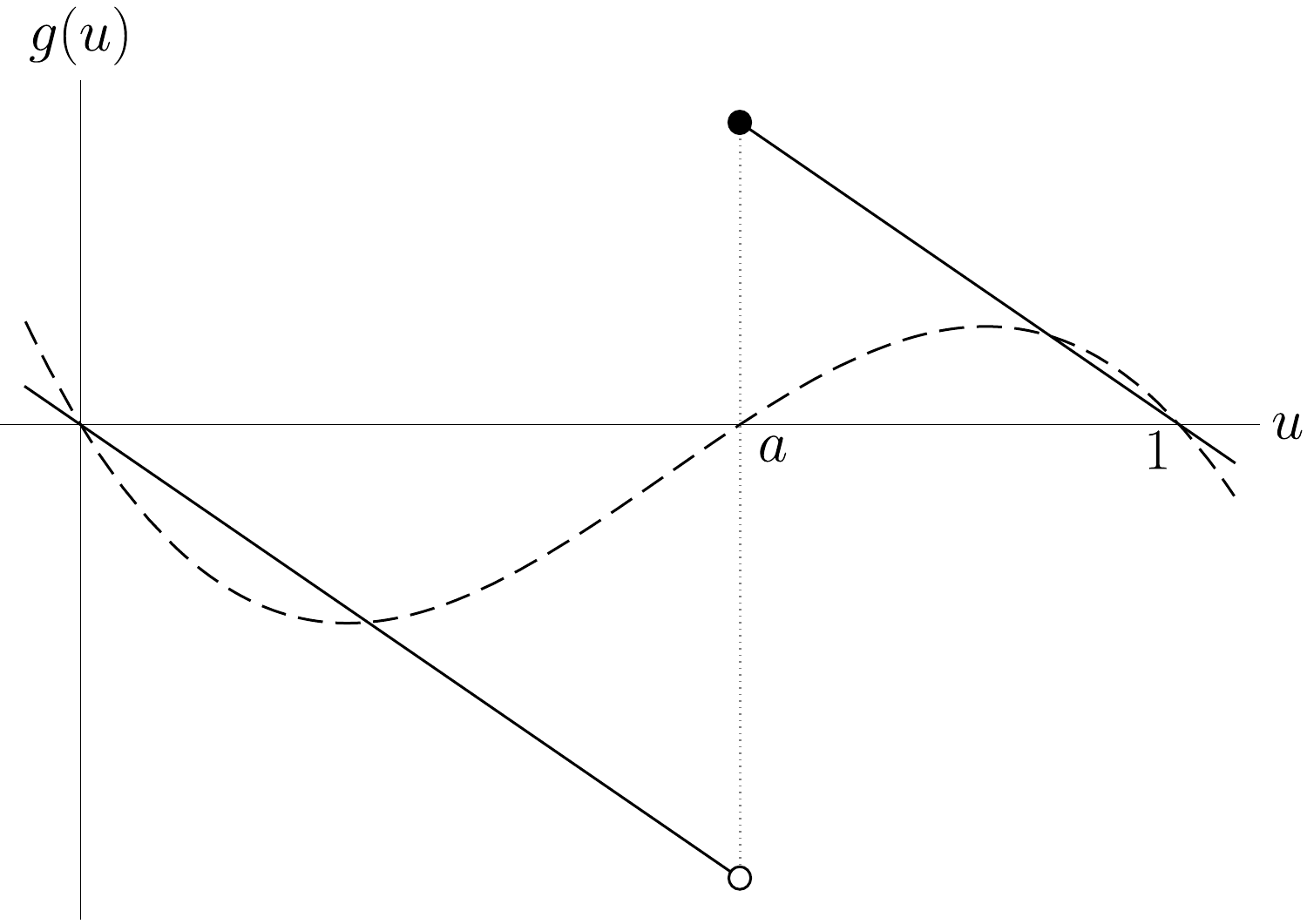}
  \captionof{figure}{A cubic bistability (dashed) and McKean's caricature \eqref{e:McKean} (solid).}
  \label{fig:bistabilities}
\end{minipage}
\end{figure}

\paragraph{Propagation reversal.} We considered the TDE~\eqref{e:TDE} with a general smooth bistability $g$ in \cite{Hupkes2023} and constructed sub- and supersolutions to show that for any $k>1$ there exist only bounded sets of parameters $(d,a)$ for which the waves are pinned ($c=0$) or propagate down the tree with $c>0$. Moreover, there is a value $d^*(k)$ such that the waves propagate up the tree with $c<0$ for a given $k>1$ and all $a\in(0,1)$, see Figure~\ref{fig:pinnings}(c).

In particular, these properties describe the \emph{propagation reversal} of waves for any $k>1$ and sufficiently large $a\approx 1$. This phenomenon implies the existence of values $0<\underline{d}_+(a,k)<\overline{d}_+(a,k)<\underline{d}_-(a,k)<d^*(k)$. Fixing $a\approx 1$ and increasing $d$, the waves are pinned for $d\in(0,\underline{d}_+(a,k))$, move down the tree with $c>0$ for $d\in(\underline{d}_+(a,k),\overline{d}_+(a,k))$, are pinned again with $c=0$ for $d\in(\overline{d}_+(a,k),\underline{d}_-(a,k))$, and reverse the direction and move up the tree with $c<0$ for $d>\underline{d}_-(a,k)$.

In \cite{Hupkes2024} the spectral convergence method inspired by \cite{Bates2003} was used to further understand and describe in detail the propagation reversal for the general smooth reaction functions $g$ in the large (but finite) diffusion regime $d\approx d^*(k)$ near $a\approx 1$, see Figure~\ref{fig:pinnings}(c).


\paragraph{McKean's caricature.} The goal of this paper is to study the propagation reversal phenomenon for the TDE~\eqref{e:TDE} with the McKean's bistable caricature (see Figure~\ref{fig:bistabilities}) $\gM$ given by \eqref{e:McKean} and get explicit bounds on the pinning region and propagation reversal. Since this piecewise linear bistability is not smooth, the results from \cite{Hupkes2023} are not applicable. McKean's caricature have been used in various alternative forms. Using the Heaviside function $H(x)$ we can write \eqref{e:McKean} as 
\[\gM(u;a)=-u + H(u-a). \]
Alternatively, multivalued versions could be considered, e.g.,
\begin{equation}\label{e:McKean:multi}    
    \gM^*(u;a) = \begin{cases}
-u & u< a,\\
[-a,1-a] & u=a,\\
1-u & u\geq a.
\end{cases}
\end{equation}

The bistable caricature \eqref{e:McKean} was proposed in \cite{McKean1970} and has been widely used. In particular, F\'{a}th \cite{Fath1998} obtained explicit bounds for the pinning region of LDE~\eqref{e:LDE} with McKean's bistability which are qualitatively the same as those for the LDE with the bistable cubic, see Figure~\ref{fig:pinnings}(b). Further F\'{a}th applied similar ideas  to describe bifurcations for the semiinfinite lattice in \cite{Fath1999}. The caricature was also used in other settings, for instance in the antidiffusion regime \cite{Vainchtein2009}, the nonhomogeneous diffusion case \cite{Moore2014}, or FitzHugh-Nagumo system \cite{Tonnelier2003}.

\paragraph{Main result.}



In this paper we construct explicit pinned waves of the TDE~\eqref{e:TDE} with the McKean's bistable caricature $g=\gM$ from \eqref{e:McKean} and derive exact bounds
\begin{equation}\label{e:bounds}
a_\pm(d,k)=\frac{1}{2}\left(1+\frac{(k-1)d\pm 1}{\sqrt{\left(1+d(k + 1)\right)^2-4 d^2 k}} \right),
\end{equation}
which characterize the region where the stationary waves exist in the following way.

\begin{theorem}\label{t:main}
Let $g=\gM$ be given by \eqref{e:McKean} and $d>0$. Then there exists a pinned monotone wave of the TDE~\eqref{e:TDE} if and only if the parameter $a$ satisfies
\begin{equation}\label{e:main}
    a_-(d,k) < a \leq a_+(d,k).
\end{equation}
Moreover, the waves are stable.
\end{theorem}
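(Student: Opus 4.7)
The plan is to exploit the piecewise linearity of $\gM$ to reduce the stationary TDE to a pair of linear second-order recurrences that can be solved in closed form. By monotonicity, any pinned monotone wave with $u_{-\infty}=0$ and $u_{+\infty}=1$ has a unique transition index between values below and (above or equal to) $a$, and by translation invariance of \eqref{e:TDE} we may assume this index is $0$, i.e.\ $u_0 < a \leq u_1$. For $i\leq 0$ the stationary equation then reads $dk\, u_{i+1} - (d(k+1)+1) u_i + d\, u_{i-1} = 0$, while for $i\geq 1$ it reads $dk\, u_{i+1} - (d(k+1)+1) u_i + d\, u_{i-1} + 1 = 0$. Both share the characteristic polynomial $dk\lambda^2 - (d(k+1)+1)\lambda + d$, whose discriminant $S^2 := d^2(k-1)^2 + 2d(k+1) + 1$ is strictly positive, and an elementary check shows the roots satisfy $0<\lambda_1<1<\lambda_2$ with $\lambda_1\lambda_2 = 1/k$.

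Since $u_i\to 0$ as $i\to -\infty$ and $u_i\to 1$ as $i\to +\infty$, the unbounded mode must be discarded on each side, giving $u_i = A\lambda_2^i$ for $i\leq 0$ and $u_i = 1 + B\lambda_1^i$ for $i\geq 1$. Plugging these into the stationary TDE at the indices $i=0$ and $i=1$ and subtracting the identity each linear extension satisfies in its own region, the problem collapses to the two matching conditions $A = 1 + B$ and $A\lambda_2 = 1 + B\lambda_1$, solved by $A = (1-\lambda_1)/(\lambda_2-\lambda_1)$ and $B = (1-\lambda_2)/(\lambda_2-\lambda_1)$. Algebraic simplification using the explicit form of $\lambda_{1,2}$ then yields $u_0 = A = a_-(d,k)$ and $u_1 = A\lambda_2 = a_+(d,k)$, so the transition constraint $u_0 < a \leq u_1$ becomes exactly \eqref{e:main}. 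For the ``if'' direction, positivity of $A$ and $-B$ combined with $0<\lambda_1<1<\lambda_2$ also guarantees $u_i < u_{i+1}$ everywhere, and $u_i \leq u_0 < a$ for $i<0$ together with $u_i \geq u_1 \geq a$ for $i>1$ confirms that the branch selection in $\gM$ is consistent.

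For the stability claim I would use that $\gM'(u) = -1$ away from $u=a$, so the linearization about the pinned wave $\bar u$ (whose every index avoids the value $a$ strictly when $a \in (a_-(d,k), a_+(d,k))$) is the constant-coefficient operator $L\phi_i = d(k\phi_{i+1}-(k+1)\phi_i+\phi_{i-1}) - \phi_i$. A plane-wave computation places its spectrum on the curve $\{d(k+1)(\cos\theta-1) - 1 + \mathrm{i}\,d(k-1)\sin\theta : \theta\in[0,2\pi)\}$, entirely contained in $\{\mathrm{Re}\, z \leq -1\}$, so $L$ generates an exponentially decaying semigroup. Since $\bar u_0 < a < \bar u_1$ with a positive gap, any initial perturbation small enough to preserve $\mathrm{sign}(u_i - a)$ for every $i$ evolves under the purely linear flow generated by $L$, yielding exponential convergence back to the (shifted) wave. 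The main obstacle lies in the boundary case $a = a_+(d,k)$, where $\bar u_1 = a$ sits exactly at the discontinuity of $\gM$; there the spectral argument breaks down, and one would likely need a monotonicity / comparison-principle argument, or an appeal to the multivalued formulation \eqref{e:McKean:multi}, to complete the stability claim.
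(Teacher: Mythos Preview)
Your construction of the pinned wave is essentially identical to the paper's: both solve the same pair of linear second-order recurrences and match the two branches across the transition index. The only differences are cosmetic---you place the transition between indices $0$ and $1$ rather than $-1$ and $0$, and you label the characteristic roots in the opposite order---so your $u_0=a_-(d,k)$, $u_1=a_+(d,k)$ correspond exactly to the paper's $u_{-1}=a_-(d,k)$, $u_0=a_+(d,k)$.

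For stability the two arguments diverge in technique, though not in substance. Both observe that a sufficiently small perturbation preserves the sign of $u_i-a$ at every index, so the perturbation $p_i$ obeys the constant-coefficient linear LDE $\dot p_i = d(kp_{i+1}-(k+1)p_i+p_{i-1})-p_i$. The paper then writes down the explicit Green's function of this LDE in terms of modified Bessel functions and reads off exponential decay from its large-time asymptotics; you instead compute the Fourier symbol and observe that its real part is bounded above by $-1$. Your route is shorter and more standard; the paper's route gives an explicit decay rate but requires citing Bessel-function asymptotics. Both are valid. You are also more careful than the paper in flagging the boundary case $a=a_+(d,k)$, where one value of the profile sits exactly at the discontinuity of $\gM$; the paper's Lemma on stability does not explicitly address this, and your suggestion to handle it via the multivalued formulation \eqref{e:McKean:multi} or a comparison argument is reasonable.
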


In particular, Theorem~\eqref{e:main} implies exact bounds characterizing the propagation reversal for the TDE~\eqref{e:TDE} with the McKean's bistability~\eqref{e:McKean}. The properties are very similar to those obtained for the TDE~\eqref{e:TDE} with smooth bistabilities \cite{Hupkes2023}. The major difference consists in the fact that the pinning region is no longer bounded. Consequently, propagation reversal and the change of wave directions occur for arbitrarily large $d$ and $a\approx 1$, compare panels (c)--(d) in Figure~\ref{fig:pinnings}.

\begin{figure}
\begin{minipage}[c]{\linewidth}
\includegraphics[width=.44\linewidth]{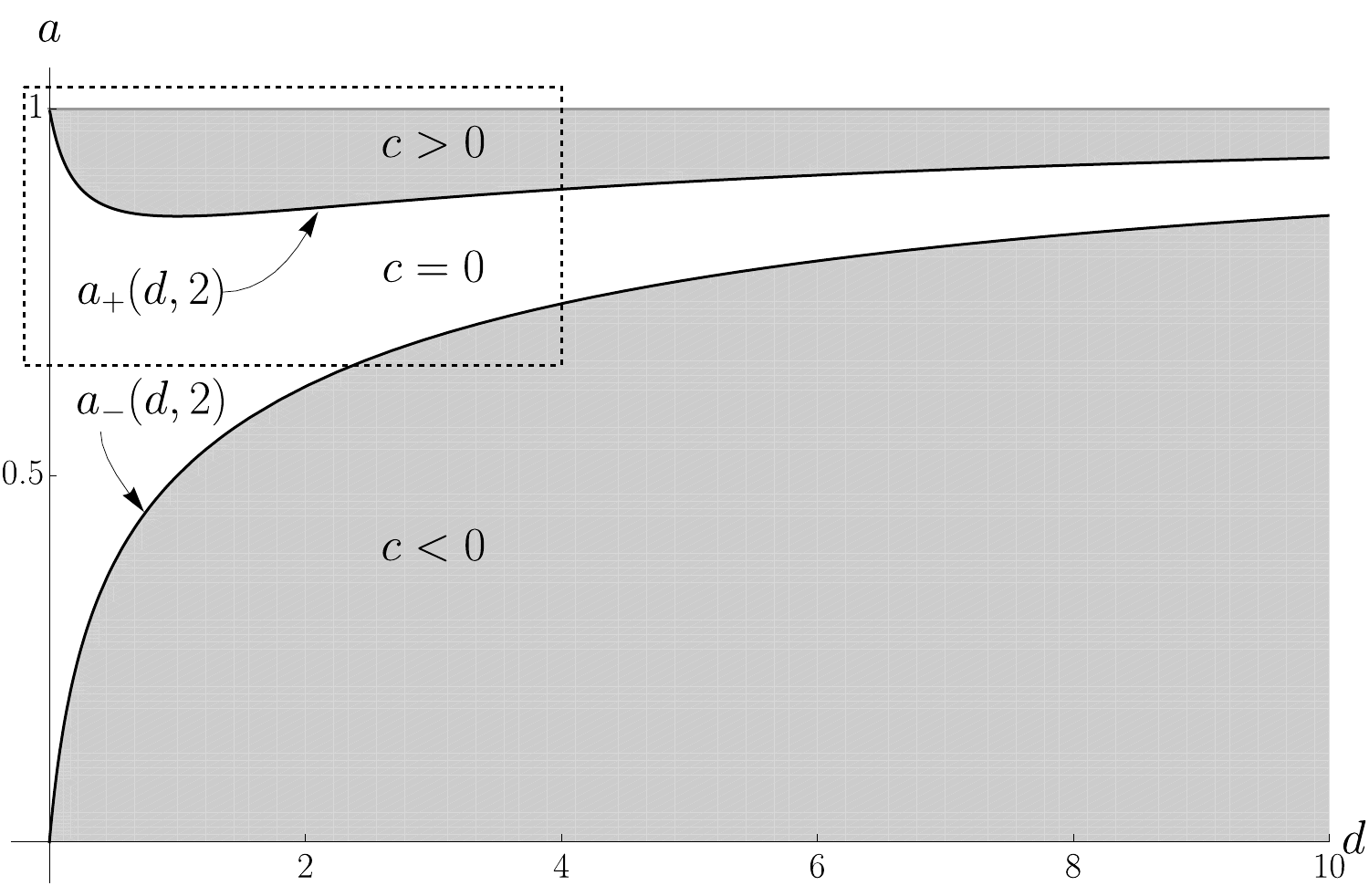}
\includegraphics[width=.54\linewidth]{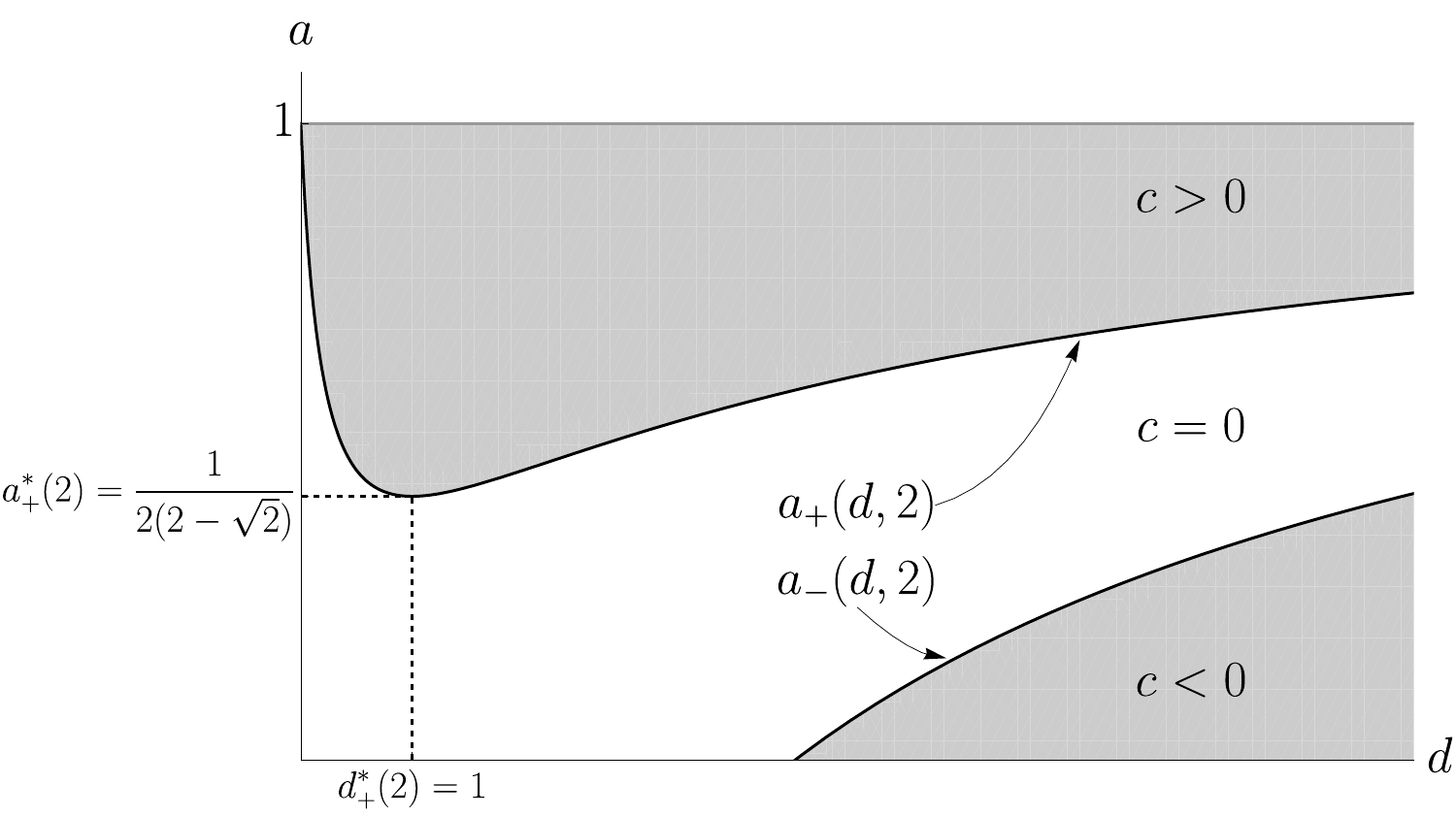}
\caption{Illustration of Theorem~\ref{t:main} $k=2$. The right panel represents the detail of the dashed rectangle in the left one.}\label{fig:k=2}
\end{minipage}
\end{figure}

\paragraph{Paper structure.} In Section~\ref{sec:proof} we provide a constructive proof of Theorem~\ref{t:main}. In Section~\ref{sec:properties} we focus on the properties of the bounds $a_\pm(d,k)$ and compare the results \cite{Fath1998} to relevant papers for the LDE~\eqref{e:LDE} and those for the TDE~\eqref{e:TDE} with smooth reactions \cite{Hupkes2024, Hupkes2023}.

\section{Proof of Theorem~\ref{t:main}}\label{sec:proof}
In order to prove Theorem~\ref{t:main} we construct a pinned wave. A stationary solution of \eqref{e:TDE} satisfies
\begin{equation}\label{e:stat}
0 = d(ku_{i+1} - (k+1)u_i + u_{i-1} ) + g(u_i;a), \quad i\in \Z. \tag{S}
\end{equation}
Taking an arbitrary horizontal shift into account, we assume, without loss of generality, that the double sequence $(u_i)$ is an increasing profile with 
\begin{equation}\label{e:limits}
\begin{split}
 u_i\rightarrow 0 &\text{ as } i\rightarrow-\infty,\\
 u_i\rightarrow 1 &\text{ as } i\rightarrow\infty,
\end{split}    
\end{equation}
and
 \begin{equation}\label{e:values}
\begin{split}
 u_i < a &\text{ as } i=-1,-2,-3,\ldots\\
 u_i\geq a &\text{ as } i=0, 1,2,3,\ldots
\end{split}    
\end{equation}
 
\noindent Employing the definition of $\gM$ \eqref{e:McKean}, the stationary solutions \eqref{e:stat} satisfy one of two linear difference equations, either the homogeneous one if $u_i<a$
\begin{equation}\label{e:stat-}
0 = d(ku_{i+1} - (k+1)u_i + u_{i-1} ) -u_i=dku_{i+1} - (d(k+1)+1)u_i + d u_{i-1}, \quad i=-1,-2,-3,\ldots, \tag{S$_-$}
\end{equation}
or the nonhomogeneous one if if $u_i\geq a$
\begin{equation}\label{e:stat+}
0 = d(ku_{i+1} - (k+1)u_i + u_{i-1} )+ 1-u_i=dku_{i+1} - (d(k+1)+1)u_i + d u_{i-1}+1, \quad i=0,1,2,3,\ldots \tag{S$^+$}
\end{equation}

The characteristic equation of both linear difference equations \eqref{e:stat-}, \eqref{e:stat+} 
\[
dk\lambda^2 - (d(k + 1) + 1)\lambda + d=0
\]
implies that
\begin{equation}
    \lambda_{12}=\frac{1+d(k + 1)\pm\sqrt{\left(1+d(k + 1)\right)^2-4 d^2 k}}{2 d k},
\end{equation}
with $\lambda_1>1$ and $\lambda_2\in(0,1)$ for $d>0$ and $k>1$.

The limit conditions \eqref{e:limits} and the equations \eqref{e:stat-}, \eqref{e:stat+}   imply that
\begin{equation}
    u_i=\begin{cases}
        C\cdot\lambda_1^i \quad \text{if } i=0,-1,-2,\ldots,\\
        1-D\cdot\lambda_2^i \quad \text{if } i=-1,0,1,2,\ldots\\
    \end{cases}
\end{equation}
Since both branches coincide for $i=0$ and $i=-1$ we get
\[
C=1-D, \text{ and } \frac{C}{\lambda_1}=1-\frac{D}{\lambda_2},
\]
which yields that
\[
C=\frac{1-\lambda_2}{\lambda_1-\lambda_2}\cdot \lambda_1,\quad D=\frac{\lambda_1-1}{\lambda_1-\lambda_2}\cdot\lambda_2.
\]
Thus the explicit solution is given by
\begin{equation}\label{e:pinned:full}
    u_i=\begin{cases}
        \frac{1-\lambda_2}{\lambda_1-\lambda_2}\cdot\lambda_1^{i+1} \quad \text{if } i=0,-1,-2,\ldots,\\
        1-\frac{\lambda_1-1}{\lambda_1-\lambda_2}\cdot\lambda_2^{i+1} \quad \text{if } i=-1,0,1,2,\ldots\\
    \end{cases}
\end{equation}
Specifically, for $i=0$ and $i=-1$ we have that
\begin{align*}
u_0 &=  \frac{1-\lambda_2}{\lambda_1-\lambda_2}\cdot\lambda_1 = \frac{1}{2}\left(1+\frac{(k-1)d+1}{\sqrt{\left(1+d(k + 1)\right)^2-4 d^2 k}} \right), \\
u_{-1} &=  \frac{1-\lambda_2}{\lambda_1-\lambda_2} = \frac{1}{2}\left(1+\frac{(k-1)d- 1}{\sqrt{\left(1+d(k + 1)\right)^2-4 d^2 k}} \right),
\end{align*}
i.e., $u_0=a_+(d,k)$ and $u_{-1}=a_-(d,k)$. Employing the Ansatz~\eqref{e:values} we obtain the inequalities \eqref{e:main}. Conversely, if \eqref{e:main} holds we can use the construction above to get the pinned wave~\eqref{e:pinned:full}.

\begin{remark}\label{r:nonstrict}
    If we repeat the construction with the multivalued caricature~\eqref{e:McKean:multi}, we obtain~\eqref{e:main} with both inequalities being nonstrict.
\end{remark}

The following lemma shows that the pinned waves are stable.
\begin{lemma}\label{l:stability}
    Let $g$ be given by \eqref{e:McKean}, $d>0$, $k>1$, and $a$ satisfy \eqref{e:main}. Then the pinned wave given by~\eqref{e:pinned:full} is stable.
\end{lemma}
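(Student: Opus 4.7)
The plan is to use the explicit structure of the McKean reaction. Since $g_{\mathrm{MK}}(u;a)$ equals $-u$ on $\{u<a\}$ and $1-u$ on $\{u\geq a\}$, both pieces have derivative $-1$; so \emph{as long as perturbations do not cross the threshold at any index}, the dynamics of the perturbation is governed by one single constant-coefficient linear operator. The main steps would be to (i) show that small perturbations of the pinned wave~\eqref{e:pinned:full} cannot cross the threshold at any index, (ii) conclude that the perturbation satisfies a linear ODE on $\ell^\infty$, and (iii) estimate the semigroup of that operator.

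First I would set $\delta = \min\{a-u_{-1},\,u_0-a\}$, which is strictly positive by the strict inequalities in~\eqref{e:main} (the case with equality would follow from the multivalued version, cf.\ Remark~\ref{r:nonstrict}). Since the constructed profile is monotone, $a-u_i\geq \delta$ for every $i\leq -1$ and $u_i-a\geq \delta$ for every $i\geq 0$. For an initial perturbation $v(0)=u(0)-u^*$ with $\|v(0)\|_\infty<\delta$, I would define
\[
\tau=\inf\{t>0:\|v(t)\|_\infty\geq \delta\}\in(0,\infty],
\]
and argue that on $[0,\tau)$ each $u_i(t)=u_i^*+v_i(t)$ lies on the same side of the threshold as $u_i^*$. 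Using that both linear branches of $g_{\mathrm{MK}}$ differ from the stationary reaction only by $-v_i(t)$, subtracting \eqref{e:stat} from \eqref{e:TDE} would yield the linear system
\begin{equation*}
\dot v_i = dk\,v_{i+1}-(d(k+1)+1)\,v_i+d\,v_{i-1}=: (Lv)_i,\qquad i\in\Z,\ t\in[0,\tau).
\end{equation*}

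Next I would estimate the $\ell^\infty$-semigroup generated by $L$. Writing $L=A-(d(k+1)+1)I$ with $(Av)_i=dk\,v_{i+1}+d\,v_{i-1}$, one has $\|A\|_{\ell^\infty\to\ell^\infty}\leq d(k+1)$, so
\[
\|e^{tL}\|_{\ell^\infty\to\ell^\infty}\leq e^{-(d(k+1)+1)t}\,e^{d(k+1)t}=e^{-t}.
\]
(Equivalently, a direct discrete maximum-principle argument on $\|v\|_\infty$ gives the same rate.) Hence $\|v(t)\|_\infty\leq e^{-t}\|v(0)\|_\infty<\delta$ on $[0,\tau)$, which forces $\tau=\infty$. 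The linear equation then governs the perturbation for all time, and we obtain exponential decay with rate at least $1$, proving stability in $\ell^\infty$.

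The only delicate point is the handling of the nonsmooth (or multivalued) reaction across the threshold $u=a$, but this is resolved at the outset by the quantitative separation $\delta>0$ of the pinned profile from $a$; once that is secured, everything reduces to a standard bounded-operator estimate on a constant-coefficient linear lattice equation, which is why I expect no further obstacle.
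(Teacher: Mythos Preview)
Your proof is correct and shares with the paper the key reduction: since both branches of $g_{\mathrm{MK}}$ have slope $-1$, a small perturbation $v=u-u^*$ satisfies the single linear lattice equation $\dot v_i = dk\,v_{i+1}-(d(k+1)+1)\,v_i+d\,v_{i-1}$. The difference lies in how decay is established. The paper invokes the explicit fundamental solution of this linear equation in terms of modified Bessel functions (from \cite{Slavik2014}) and uses their large-time asymptotics to conclude $\hat p_i(t)\to 0$; this yields the sharper rate $1+d(\sqrt{k}-1)^2$ but relies on an external reference. Your splitting $L=A-(d(k+1)+1)I$ with $\|A\|_{\ell^\infty\to\ell^\infty}\le d(k+1)$ gives the clean bound $\|e^{tL}\|\le e^{-t}$ by a one-line semigroup estimate, which is more elementary and self-contained, at the price of a slightly weaker (but perfectly sufficient) rate. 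Your explicit $\delta$--$\tau$ bootstrap also makes the threshold-avoidance argument more transparent than the paper's version. The only residual issue---shared with the paper---is the boundary case $a=a_+(d,k)$, where $u_0=a$ and $\delta=0$; your parenthetical deferral to the multivalued caricature is reasonable, though neither proof fully resolves stability at that endpoint for the single-valued $g_{\mathrm{MK}}$.
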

\begin{proof}
    Let $u_i$ be the pinned wave constructed in \eqref{e:pinned:full}. We consider its perturbation
    \begin{equation}\label{e:perturb}
        \tilde{u}_i=u_i-p_i,\quad i\in \mathbb{Z}
    \end{equation}
    and assume, without loss of generality, that $p_i>0$. This is possible by employing a horizontal shift $u_{i+k}$, $k\in\mathbb{Z}$. Furthermore, we also assume that $p_i$ is small enough so that if $u_i>a$ then also $\tilde{u}_i>a$. 

    Consequently, we can substitute \eqref{e:perturb} into the TDE~\eqref{e:TDE} and take advantage of the fact that $u_i$ satisfies \eqref{e:stat} to obtain a linear LDE
    \begin{align}
        \dot{p}_i(t)&=d \left( k {p}_{i+1}(t)-(k+1)p_i(t)+p_{i-1}(t) \right)-p_i(t), \quad i\in\mathbb{Z},\ t>0,\notag \\
        &= d k {p}_{i+1}(t)-\left(d(k+1)+1\right)p_i(t)+dp_{i-1}(t).\notag
    \end{align}
    There is an explicit solution for the unique bounded solution $\hat{p}(t)$ for its Gaussian kernel satisfying
    \[
    \hat{p}_0(0)=1,\ \hat{p}_i(0)=0,\quad i\in\mathbb{Z}\setminus\{0\}.
    \]
    given by \cite[Section~4.2]{Slavik2014}
    \[
    \hat{p}_i(t)=\frac{e^{-\left(d(k+1)+1\right)t} \cdot I_x(2d\sqrt{k}t)}{\sqrt{k^i}},
    \]
    where $I_x$ is the modified Bessel function of the first kind. Since for $t\rightarrow\infty$ we have \cite[Theorem~4.5]{Slavik2014}
    \[
    \hat{p}_i(t)\sim\frac{e^{-\left(2d\sqrt{k}-d(k+1)-1\right)t}}{\sqrt{4\pi d\sqrt{k}k^it}},
    \]
    we have that both $\hat{p}_i(t)\rightarrow 0$ and ${p}_i(t)\rightarrow 0$ as $t\rightarrow\infty$, see \cite{Slavik2014}.
\end{proof}

\section{Properties of the pinning region}\label{sec:properties}
In order to describe differences with the TDE~\eqref{e:TDE} with smooth $g$, let us study in detail the bounds $a_\pm(d,k)$ from \eqref{e:bounds}.
\begin{theorem}\label{t:properties}
    Let $d>0$ and $k>1$. Then the bounds $a_\pm(d,k)$ defined in \eqref{e:bounds} satisfy
    \begin{enumerate}[(a)]
        \item $\lim\limits_{d\To 0+} a_-(d,k)= 0 $ for all $k>1$,
        \item $\lim\limits_{d\To 0+} a_+(d,k)= 1 $ for all $k>1$,
        \item $\lim\limits_{d\To\infty} a_\pm(d,k)= 1 $ for all $k>1$,
        \item $\lim\limits_{k\To 1+} a_\pm(d,k)=\frac{1}{2}\left(1\pm\frac{1}{\sqrt{1+4d}}\right)$ for all $d>0$,
        \item $\lim\limits_{k\To \infty} a_\pm(d,k)=1$ for all $d>0$,
        \item for a given $k>1$, the bound $a_-(d,k)$ is strictly increasing for $d>0$,
        \item for a given $k>1$, the bound $a_+(d,k)$ attains a global minimum 
        \begin{equation}\label{e:a+k:minimum}
            a_+^*(k)=\frac{1}{2\left(k-\sqrt{k(k-1)}\right)}
        \end{equation} at $d_+^*(k)=\frac{1}{k-1}$, is strictly decreasing for $0<d<d_+^*(k)$ and strictly increasing for $d>d_+^*(k)$,
        \item for a given $d>0$ the bounds $a_\pm(d,k)$ are strictly increasing for $k>1$.
    \end{enumerate}    
\end{theorem}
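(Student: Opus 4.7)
The plan is to dispose of the limits (a)--(e) by asymptotics and the monotonicity claims (f)--(h) by direct differentiation, after a preliminary simplification of the discriminant. The key identity I would use throughout is
\[
\Delta(d,k) \;:=\; \bigl(1+d(k+1)\bigr)^2 - 4 d^2 k \;=\; \bigl(1+d(k-1)\bigr)^2 + 4d,
\]
and I set $\phi_\pm(d,k) = \bigl((k-1)d \pm 1\bigr)/\sqrt{\Delta}$ so that $a_\pm = \tfrac12(1 + \phi_\pm)$.

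The five limit claims become short evaluations in this form. Parts (a)--(b) follow from $\Delta(0,k)=1$, giving $\phi_\pm \to \pm 1$; part (d) follows from $\Delta(d,1) = 1+4d$; and (c), (e) follow from the leading-order asymptotics $\sqrt{\Delta} \sim d(k-1)$ as $d\to\infty$ and $\sqrt{\Delta} \sim dk$ as $k\to\infty$, so that the numerator $(k-1)d\pm 1$ is matched to leading order and hence $\phi_\pm \to 1$.

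For monotonicity in $d$ (parts (f)--(g)), I compute $\partial_d \phi_\pm$ by the quotient rule with $\partial_d \Delta = 2(k-1)(1+d(k-1)) + 4$. Writing $x = (k-1)d$, the telescoping identity $(1+x) - (x\pm 1) \in \{0,2\}$ collapses the numerator of $\Delta^{3/2}\,\partial_d\phi_\pm$ to $2k(1+d(k-1))$ in the minus case and $2(d(k-1)-1)$ in the plus case. The former is strictly positive, yielding (f) at once. The latter vanishes exactly at $d_+^*(k) = 1/(k-1)$ and changes from negative to positive there, which proves the critical-point structure in (g). Substituting $d_+^*(k)$ into $a_+$ gives $a_+^*(k) = \tfrac12\bigl(1 + \sqrt{(k-1)/k}\bigr)$; rationalising $\sqrt{k-1}/\sqrt{k}$ by the factor $k+\sqrt{k(k-1)}$ converts this to the stated closed form \eqref{e:a+k:minimum}.

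Part (h) follows by the analogous calculation with $\partial_k \Delta = 2d(1+d(k-1))$. Up to the positive factor $d$, the numerator of $\partial_k\phi_\pm$ reduces to $\Delta - \bigl((k-1)d \pm 1\bigr)(1+d(k-1))$, which simplifies to $4d$ in the plus case and $2\bigl(1+(k+1)d\bigr)$ in the minus case---both strictly positive for $d>0$, $k>1$. The main obstacle I anticipate is purely clerical: keeping the quotient-rule numerators organised so that $(1+x)^2$ cancels against $(x\pm 1)(1+x)$ in the right way, and carrying out the rationalisation in (g) without slips.
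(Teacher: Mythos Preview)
Your proposal is correct and is precisely the ``straightforward application of standard calculus'' that the paper invokes and then omits. Your preliminary identity $\Delta=(1+d(k-1))^2+4d$ is the right simplification, and the derivative computations for (f)--(h) check out (in particular the collapse to $2k(1+d(k-1))$, $2(d(k-1)-1)$, $4d$, and $2(1+(k+1)d)$ is accurate); you are simply supplying the details the paper leaves to the reader.
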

\begin{proof}
Because of the explicit nature of Theorem~\ref{e:main}, the proofs are rather straightforward application of standard calculus and thus omitted.
\end{proof}
\begin{remark} Note that the item (d) is consistent with the result of Fáth \cite{Fath1998} for the LDE~\eqref{e:LDE} with McKean's bistability~\eqref{e:McKean}. Similarly, other properties mimic roughly those for the TDE~\eqref{e:TDE} with smooth bistabilities. However, the expressions are based on explicit computations (e.g., explicit values in (g)) and the main qualitative  difference is connected to the item (c): the bound $a_+$ exist for all $d>0$ which is not the case for smooth nonlinearities, see \cite[Theorem 2.5]{Hupkes2023}, see Figure~\ref{fig:pinnings}.
\end{remark}

We can also formulate this result in terms of bounds on the diffusion $d$. Because of the nonmonotonicity of $a_+$ (see Figure~\ref{fig:k=2} and Theorem~\ref{t:properties}(g)) we need three bounds in this case.

\begin{corollary}[Propagation reversal]\label{t:propagation:reversal}
Let $k>1$ and $a>a_+^*(k)$ where $a_+^*(k)$ is defined by \eqref{e:a+k:minimum}. Then there exist values $\underline{d}_+(a,k)$, $\overline{d}_+(a,k)$, $\underline{d}_-(a,k)$ such that:
\begin{enumerate}[label=(\alph*)]
    \item there is a stationary wave for $d\leq \underline{d}_+(a,k)$,
    \item there is a traveling wave with $c>0$ for $\underline{d}_+(a,k) < d < \overline{d}_+(a,k)$,    
    \item there is a stationary wave for $\overline{d}_+(a,k)\leq d<\underline{d}_-(a,k)$,
    \item there is a traveling wave with $c<0$ for $d>\overline{d}_+(a,k)$.    
\end{enumerate}
\end{corollary}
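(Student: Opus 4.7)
The plan is to derive the corollary directly from Theorem~\ref{t:main} by inverting the pinning region in $d$ for fixed $a$, using the monotonicity/shape information about $a_\pm(\cdot,k)$ collected in Theorem~\ref{t:properties}, and then to invoke an existence argument for monotone traveling waves outside this region.

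First I would analyze the level sets of the boundary curves as functions of $d$. From parts (a), (c), (f) of Theorem~\ref{t:properties} the map $d\mapsto a_-(d,k)$ is a strictly increasing bijection from $(0,\infty)$ onto $(0,1)$, so there is a unique $\underline{d}_-(a,k)$ solving $a=a_-(d,k)$, and $a>a_-(d,k)$ iff $d<\underline{d}_-(a,k)$. From parts (b), (c), (g), the map $d\mapsto a_+(d,k)$ is U-shaped, strictly decreasing on $(0,d_+^*(k))$ and strictly increasing on $(d_+^*(k),\infty)$ with $d_+^*(k)=1/(k-1)$, minimum value $a_+^*(k)$, and boundary limits equal to $1$. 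Since $a\in(a_+^*(k),1)$ by hypothesis, the intermediate value theorem applied to each monotone branch produces exactly two values $\underline{d}_+(a,k)<d_+^*(k)<\overline{d}_+(a,k)$ with $a=a_+(d,k)$, and $a\leq a_+(d,k)$ iff $d\in(0,\underline{d}_+(a,k)]\cup[\overline{d}_+(a,k),\infty)$.

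Next I would verify the key ordering $\overline{d}_+(a,k)<\underline{d}_-(a,k)$. From \eqref{e:bounds},
\[
a_+(d,k)-a_-(d,k)=\frac{1}{\sqrt{(1+d(k+1))^2-4d^2 k}}>0,
\]
since $(1+d(k+1))^2-4d^2 k=1+2d(k+1)+d^2(k-1)^2>0$ for every $d>0$. Hence $a_-(\overline{d}_+,k)<a_+(\overline{d}_+,k)=a=a_-(\underline{d}_-,k)$, and strict monotonicity of $a_-$ forces $\overline{d}_+<\underline{d}_-$. Intersecting the two $d$-sets above, Theorem~\ref{t:main} yields a pinned monotone wave precisely for $d\in(0,\underline{d}_+(a,k)]\cup[\overline{d}_+(a,k),\underline{d}_-(a,k))$, which is exactly the content of (a) and (c).

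The genuine obstacle is (b) and (d): Theorem~\ref{t:main} alone only says that \emph{no} pinned monotone wave exists inside $(\underline{d}_+,\overline{d}_+)$ or for $d>\underline{d}_-$; it does not by itself produce a traveling wave with the claimed sign of $c$. To close this step I would adapt the sub/supersolution construction of~\cite{Hupkes2023} to the piecewise linear $\gM$, or alternatively follow the explicit piecewise-exponential ansatz used by F\'{a}th~\cite{Fath1998} for the LDE, solving the shifted characteristic equation on each monotone region and matching across the threshold $u=a$ to obtain an explicit traveling profile. The sign of $c$ is then read off from the location of $(d,a)$: above the curve $a_+(\cdot,k)$ (i.e.\ $a>a_+(d,k)$) the wave is forced downward through the tree with $c>0$, while below the curve $a_-(\cdot,k)$ (i.e.\ $a<a_-(d,k)$) it is forced upward with $c<0$, giving (b) and (d) respectively.
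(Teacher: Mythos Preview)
The paper states this corollary without proof; it is offered as an immediate reformulation of Theorems~\ref{t:main} and~\ref{t:properties}. Your inversion argument for (a) and (c)---reading off $\underline{d}_+,\overline{d}_+$ from the two monotone branches of $a_+(\cdot,k)$, reading off $\underline{d}_-$ from the strictly increasing $a_-(\cdot,k)$, and checking $\overline{d}_+<\underline{d}_-$ via $a_+-a_->0$---is correct and is precisely the intended derivation.

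You are also right to flag (b) and (d) as a genuine gap relative to Theorem~\ref{t:main}: that theorem only characterizes the \emph{pinning} set, so its failure gives nonexistence of a pinned monotone wave, not existence of a traveling wave with a prescribed sign of $c$. The paper itself does not supply this step for the McKean nonlinearity; it implicitly treats the existence and sign of $c$ outside the pinning region as known, despite noting elsewhere that the smooth-$g$ results of~\cite{Hupkes2023} do not apply directly. Your two proposed routes---adapting the sub/supersolution machinery of~\cite{Hupkes2023} to the piecewise linear $\gM$, or building an explicit piecewise-exponential profile \`a la F\'{a}th~\cite{Fath1998}---are the natural ways to close this, but either would be additional work beyond what the paper actually contains. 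Note also that the stated range in (d) should read $d>\underline{d}_-(a,k)$ rather than $d>\overline{d}_+(a,k)$; otherwise it overlaps (c). Your argument is consistent with the corrected version.
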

\begin{remark}
    Closed-form expressions for $\underline{d}_+(a,k)$, $\overline{d}_+(a,k)$, $\underline{d}_-(a,k)$ can be obtained from equations $a_\pm(d,k)=0$, employing \eqref{e:bounds}. 
    
    Furthermore, in the spirit of Remark~\ref{r:nonstrict} all inequalities become nonstrict once the multivalued bistability~\eqref{e:McKean:multi} is considered.    
\end{remark}

\paragraph{Acknowledgments} PS gratefully acknowledges the support by the Czech Science Foundation grant no. GA22-18261S.

\end{document}